\newtheorem{thm}{Theorem}
\newtheorem{cor}[thm]{Corollary}
\newtheorem*{thm*}{Theorem}
\theoremstyle{definition}
\theoremstyle{remark}
\newtheorem{qn}[thm]{Question}
\newtheorem*{acknowledgement}{Acknowledgments}
 \newcommand{\cP}{\mathcal{P}}
\newcommand{\bo}{\ensuremath{\mathrm{box}}}
\newcommand{\wco}{\ensuremath{\mathrm{wcol}}}
\newcommand{\co}{\ensuremath{\mathrm{col}}}
\newcommand{\di}{\ensuremath{\mathrm{dim}}}
\renewcommand{\le}{\leqslant}
\renewcommand{\leq}{\leqslant}
\renewcommand{\ge}{\geqslant}
\def\longequation{$$\vcenter\bgroup\advance\hsize by -9em%
\noindent\ignorespaces\refstepcounter{equation}}%
\def\endlongequation{\egroup\eqno(\theequation)$$\global\@ignoretrue}
\begin{document}
\title{Boxicity, poset dimension, and excluded minors}
\author{Louis Esperet} \address{Laboratoire G-SCOP (CNRS,
  Grenoble-INP), Grenoble, France}
\email{louis.esperet@grenoble-inp.fr}

\author{Veit Wiechert} \address{Institut f\"ur Mathematik, Technische Universit\"at Berlin, Berlin, Germany}

\thanks{Louis Esperet is partially supported by ANR Project GATO (\textsc{anr-16-ce40-0009-01}), and LabEx PERSYVAL-Lab
  (\textsc{anr-11-labx-0025}).}

\date{}
\sloppy

\begin{abstract}
In this short note, we relate the boxicity of graphs (and the
dimension of posets) with their
generalized coloring parameters. In particular, together with known
estimates, our results imply that
any graph with no $K_t$-minor can be represented as the intersection of
$O(t^2\log t)$ interval graphs (improving the previous bound of $O(t^4)$), and as the intersection of $\tfrac{15}2 t^2$
circular-arc graphs.
\end{abstract}
\maketitle

\section{Introduction}

The \emph{intersection} $G_1 \cap \cdots \cap G_k$ of $k$ graphs
$G_1,\ldots ,G_k$ defined on the same vertex set $V$, is the graph
$(V,E_1 \cap \ldots \cap E_k)$, where $E_{i}$ ($1 \le i \le k$)
denotes the edge set of $G_{i}$.  The
\emph{boxicity} $\bo(G)$ of a graph $G$, introduced by
Roberts~\cite{Rob69}, is defined as the smallest $k$ such that $G$ is
the intersection of $k$ interval graphs. 


\smallskip

Scheinerman proved that outerplanar graphs have boxicity at
most two \cite{Sch84} and Thomassen proved that planar graphs
have boxicity at most three \cite{Tho86}. Outerplanar graphs have no
$K_4$-minor and planar graphs have no $K_5$-minor, so a natural
question is how these two results extend to graphs with no $K_t$-minor for
$t\ge 6$.

It was proved in~\cite{EJ13} that if a graph has acyclic chromatic number at most $k$,
then its boxicity is at most $k(k-1)$. Using the fact that
$K_t$-minor-free graphs have acyclic chromatic number at most $O(t^2)$~\cite{HOQRS17}, it implies that graphs
with no $K_t$-minor have boxicity $O(t^4)$.
On the other hand, it was noted in~\cite{Esp16} that a result of Adiga, Bhowmick and Chandran~\cite{ABC10} (deduced from a result
of Erd\H os, Kierstead and Trotter~\cite{EKT91}) implies the existence of
graphs with no $K_t$-minor and with boxicity $\Omega(t\sqrt{\log t})$.

\smallskip

In this note, we relate the boxicity of graphs with their generalized
coloring numbers (see the next section for a precise
definition). Using this connection together with earlier results, we
prove the following result.

\begin{thm}\label{thm:bomin}
  There is a constant $C>0$ such that every $K_t$-minor-free graph has boxicity at most $C t^2 \log t$.
\end{thm}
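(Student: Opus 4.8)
The plan is to bound the boxicity of an arbitrary graph by a function of its strong $2$-coloring number $\co_2$, and then to feed in the known polynomial estimate on $\co_2$ for $K_t$-minor-free graphs. Concretely, I would first prove a connection lemma of the shape $\bo(G)\le c\,\co_2(G)\log\co_2(G)$ for an absolute constant $c$, and then invoke the bound $\co_2(G)=O(t^2)$ valid for every $K_t$-minor-free graph (van den Heuvel, Ossona de Mendez, Quiroz, Rabinovich and Siebertz), which reflects the fact that the \emph{strong} generalized coloring numbers of a minor-closed class grow only polynomially in $t$ (and linearly in the radius). Substituting gives $\bo(G)=O\!\big(t^2\log(t^2)\big)=O(t^2\log t)$, which is exactly the claimed bound with $C$ absorbing all constants. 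This also makes transparent where the improvement over the $O(t^4)$ bound of \cite{EJ13} comes from: the acyclic chromatic number (of order $t^2$) is replaced by the smaller, ordering-based parameter $\co_2$, and the quadratic loss of \cite{EJ13} becomes a mere logarithmic one.

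For the connection lemma I would fix a vertex ordering $\sigma$ witnessing $\co_2(G)=k$, so that every vertex strongly $2$-reaches at most $k-1$ earlier vertices. These reach sets make the associated $2$-reachability graph $(k-1)$-degenerate under $\sigma$, hence properly $k$-colorable; fix such a coloring $\phi\colon V(G)\to[k]$. The decisive structural fact is that non-adjacency can be certified locally: if $u<_\sigma v$ are non-adjacent, then either $u$ lies in the reach set of $v$, in which case $\phi(u)\ne\phi(v)$, or $u$ and $v$ have no common neighbour lying above both of them in $\sigma$. I would then, for each colour $a\in[k]$, construct $O(\log k)$ interval supergraphs of $G$ that separate all non-edges $\{u,v\}$ for which $\phi(v)=a$ and $u$ lies in the reach set of $v$: since at most $k$ vertices are relevant, one can encode them by binary strings of length $\lceil\log_2 k\rceil$ and build, for each bit, an interval graph from $\sigma$ in which $v$ is pried apart from exactly those non-neighbours whose code differs from its own in that bit. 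Summing over the $k$ colours yields $O(k\log k)$ graphs; the remaining same-colour non-edges, which by the above have no common neighbour above their endpoints, are separated by a further family derived from $\sigma$ exploiting that property, without increasing the order of magnitude. One then verifies that every edge of $G$ survives in each constructed graph while every non-edge is separated in at least one, so that the intersection is exactly $G$ and $\bo(G)=O(k\log k)$.

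The main obstacle is precisely this interval-graph construction: one must simultaneously guarantee that \emph{each} graph in the family is an interval supergraph of $G$ (no edge of $G$ is ever separated) and that \emph{every} non-edge is separated by at least one member, all while spending only $O(\log k)$ graphs per colour rather than the naive $O(k)$. The binary-encoding step is what buys the logarithmic—rather than linear—dependence on $k$, and arranging the interval endpoints along $\sigma$ so that edge-preservation and non-edge-separation hold together is the delicate part; the clause $\phi(u)\ne\phi(v)$ (resp.\ the absence of a common neighbour above) is exactly what is needed to know that the pair to be separated is not forced together by some edge of $G$. Once the connection lemma $\bo=O(\co_2\log\co_2)$ is in hand, the remainder is the short arithmetic above, so the quantitative heart of Theorem~\ref{thm:bomin} is this lemma together with the external estimate $\co_2=O(t^2)$.
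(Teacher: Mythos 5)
Your overall architecture differs from the paper's: you want a connection lemma $\bo(G)=O(\co_2(G)\log\co_2(G))$ and then the bound $\co_2(G)=O(t^2)$ from~\cite{HOQRS17}, whereas the paper proves a \emph{linear} connection $\bo(G)\le 2\,\wco_2^*(G)$ to a variant of the \emph{weak} $2$-coloring number, and then shows $\wco_2^*(G)=O(d^2)$ where $d=O(t\sqrt{\log t})$ is the Kostochka--Thomason degree bound; the $\log t$ in the final answer comes from that degree bound, not from the connection lemma. Your route is not unreasonable in spirit (the paper does use $\co_2$, but only to get \emph{circular-arc} graphs, via $\di^\circ(G)\le 3\co_2(G)$), but as written your connection lemma is not established, and the gap is exactly at the point where $\co_2$ and $\wco_2$ genuinely differ.

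Concretely: your ``decisive structural fact'' is a tautology that does not usefully partition the non-edges. Take $u<_\sigma v$ non-adjacent with $\phi(u)=i\ne\phi(v)$, where $v$ has two neighbours $x,y$ of colour $i$ with $x<_\sigma u<_\sigma v<_\sigma y$ and $u$ lying between $x$ and $y$ in the colour class. Under a \emph{strong} $2$-reachability colouring this configuration is perfectly possible ($x$ is not strongly reachable from $y$ via $v$ because $v<_\sigma y$), $u$ need not lie in the reach set of $v$ (their common neighbours may all lie below $v$), and the pair is not same-coloured --- so it falls into neither of the two families you propose to separate. Any interval supergraph of $G$ assigning points along the colour-class order must give $v$ an interval covering both $x$'s and $y$'s points, hence covering $u$'s point; this is precisely why the paper, when working with $\co_2$, resorts to complements of intervals (circular arcs) and only obtains Theorem~\ref{thm:cibomin}, not Theorem~\ref{thm:bomin}, from it. With the \emph{weak} colouring used in Theorem~\ref{thm:bowco} the configuration is impossible ($x$ is weakly $2$-reachable from $y$ through $v$ wherever $v$ sits), which is what makes the linear bound go through. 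Separately, your binary-encoding step is underspecified: the reach set of $v$ is $v$-dependent, so a vertex $u$ would receive different codes relative to different $v$'s, and you do not explain how a single interval graph per (colour, bit) can be an edge-preserving supergraph of $G$ while separating all the intended pairs. To repair the argument along the paper's lines, replace $\co_2$ by $\wco_2^*$ (or $\wco_2$), drop the $\log k$ entirely, and obtain the $\log t$ from the average-degree bound for $K_t$-minor-free graphs instead.
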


Our technique can be slightly refined (and the bound can be slightly
improved) if instead of considering boxicity we consider 
a variant, in which we seek to represent graphs
as the intersection of circular-arc graphs
(instead of interval graphs as in the definition of boxicity).

\begin{thm}\label{thm:cibomin}
If $G$ has no $K_t$-minor, then $G$ can be represented as the
intersection of at most $\tfrac{15}2t^2$ circular-arc
graphs.
\end{thm}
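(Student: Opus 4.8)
The plan is to bound the number of circular-arc graphs needed by a single generalized colouring number, and then to invoke the known estimate for that parameter on $K_t$-minor-free graphs. Write $\mathrm{car}(G)$ for the least number of circular-arc graphs whose intersection is $G$. I would first prove the clean inequality $\mathrm{car}(G) \le 3\,\co_2(G)$, where $\co_2(G)$ is the strong $2$-colouring number of $G$ (defined in the next section). Granting this, it remains to recall the known estimate $\co_2(G) \le 5\binom{t-1}{2}$ for every $K_t$-minor-free graph $G$ (the $r=2$ instance of $\co_r(G)\le(2r+1)\binom{t-1}{2}$), whence
\[
  \mathrm{car}(G) \;\le\; 3\,\co_2(G) \;\le\; 15\binom{t-1}{2} \;=\; \tfrac{15}{2}(t-1)(t-2) \;\le\; \tfrac{15}{2}\,t^2 ,
\]
which is exactly the claimed bound. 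So the whole theorem reduces to the colouring-number inequality together with the minor-closed estimate.

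For the inequality $\mathrm{car}(G) \le 3\,\co_2(G)$ I would argue constructively. Fix a vertex ordering $\sigma$ witnessing $\co_2(G)$, so that every vertex has at most $\co_2(G)-1$ strongly $2$-reachable predecessors, and place the vertices on a circle in the order $\sigma$, each owning a base point. The auxiliary graph joining every vertex to its strongly $2$-reachable predecessors is $(\co_2(G)-1)$-degenerate, hence properly colourable with $\co_2(G)$ colours; fix such a colouring. For each colour $j$ I build a bounded number of circular-arc supergraphs of $G$ as follows: every colour-$j$ vertex gets a tiny arc at its base point, while every other vertex $u$ gets a long arc---essentially the whole circle minus one gap---so that any two long arcs automatically overlap (preserving every edge not incident to colour $j$) and the long arc of $u$ meets the tiny arc of a colour-$j$ vertex $v$ exactly when $uv\in E(G)$. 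Since a circular arc has a single gap, the colour-$j$ non-neighbours of $u$ that must be skipped have to be contiguous; splitting them into a few cyclic sectors and using one supergraph per sector destroys, across the family, every non-edge with a colour-$j$ endpoint. Doing this for all $\co_2(G)$ colours destroys every non-edge while keeping every edge, and a careful arrangement holds the number of supergraphs per colour to three, giving $3\,\co_2(G)$ in total.

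The main obstacle is controlling the number of cyclic sectors per colour, which is exactly where strong $2$-reachability and the circular (rather than linear) model pay off. For a fixed colour $j$ and vertex $u$, the colour-$j$ non-neighbours that $u$'s long arc must skip are governed by how the neighbours of $u$ are distributed relative to the base points along $\sigma$; the bound on strongly $2$-reachable predecessors is what confines these to a constant number of contiguous blocks, and the freedom to wrap around the circle removes the extra logarithmic number of ``scales'' that the interval model would require. Making this precise---so that three supergraphs per colour genuinely suffice while every edge is simultaneously preserved in each of them---is the crux, and it is this circular refinement that upgrades the interval argument behind Theorem~\ref{thm:bomin} to the sharper constant $\tfrac{15}{2}$ of Theorem~\ref{thm:cibomin}.
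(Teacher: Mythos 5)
Your reduction is exactly the one in the paper: the theorem follows from the inequality $\di^\circ(G)\le 3\,\co_2(G)$ (this is Theorem~\ref{thm:cagco}) combined with the bound $\co_2(G)\le \tfrac52(t-1)(t-2)$ for $K_t$-minor-free graphs (Theorem~\ref{thm:cot}, from~\cite{HOQRS17}), and your arithmetic $3\cdot\tfrac52(t-1)(t-2)\le\tfrac{15}{2}t^2$ is correct. The problem is that the inequality $\di^\circ(G)\le 3\,\co_2(G)$ is the entire content of the argument, and your sketch of it has a genuine gap at precisely the point you yourself flag as ``the crux''. The assertion that the bound on strongly $2$-reachable predecessors ``confines'' the colour-$j$ base points that a long arc must skip ``to a constant number of contiguous blocks'' is not true as stated. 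What strong $2$-reachability actually gives is that each vertex $u$ has at most one colour-$j$ neighbour \emph{preceding} it in $\pi$ (two such neighbours $a<_\pi b<_\pi u$ would make $a$ strongly $2$-reachable from $b$ via the path $a,u,b$, forcing them to get distinct colours). It says nothing about the colour-$j$ neighbours of $u$ that come \emph{after} $u$: these can be interleaved arbitrarily with colour-$j$ non-neighbours, so the full set of base points that $u$'s arc must avoid can consist of arbitrarily many cyclic sectors, and no bounded number of one-gap arcs for $u$ will avoid all of them while still covering every neighbour.

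The missing idea is an asymmetric assignment of non-edges to colours: each non-edge $uv$ with $u<_\pi v$ is charged to the colour $i$ of the \emph{smaller} endpoint $u$, and only those non-edges need to be destroyed by the three graphs associated with colour $i$. For a fixed $v$ not coloured $i$, the colour-$i$ non-neighbours of $v$ that \emph{precede} $v$ form an initial segment of the colour-$i$ vertices minus at most one element (the unique preceding colour-$i$ neighbour, if any), hence at most two contiguous blocks. The paper handles the block before the first colour-$i$ neighbour and the block after the last one with the two \emph{interval} graphs $I_i$, $I_{i+c}$ already built in Theorem~\ref{thm:bowco}, and uses a single circular-arc graph $I_{i+2c}$, whose one gap for $v$ is the open interval between its first and second colour-$i$ neighbours, for the remaining block; the fact that the second smallest colour-$i$ neighbour of $v$ must come after $v$ is what guarantees this gap catches everything left over. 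Without this division of labour your count of three supergraphs per colour is not established, so as written the proposal does not yet prove the theorem, even though the surrounding skeleton is exactly right.
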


\medskip

The {\em
  dimension} of a poset $\cP$, denoted by $\di(\cP)$, is the minimum number of linear orders
whose intersection is exactly $\cP$. Adiga, Bhowmick and
Chandran~\cite{ABC10} discovered a nice connection
between the boxicity of graphs and the dimension of posets, which has
the following consequence: for any poset $\cP$ with comparability graph
$G_{\cP}$, $\di(\cP)\le 2 \bo(G_{\cP})$. In particular, our main result implies
the following.

\begin{thm}\label{thm:dibomin}
There is a constant $C>0$ such that if ${\cP}$ is a poset whose comparability graph $G_{\cP}$ has no
$K_t$-minor, then $\di(\cP)\le Ct^2 \log t$.
\end{thm}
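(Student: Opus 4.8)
The plan is to obtain Theorem~\ref{thm:dibomin} as an immediate consequence of the boxicity bound in Theorem~\ref{thm:bomin}, fed through the inequality $\di(\cP)\le 2\bo(G_{\cP})$ of Adiga, Bhowmick and Chandran. Both ingredients are already assembled in the text, so the argument is a short chaining of inequalities rather than a new construction.

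Concretely, I would proceed as follows. Let $\cP$ be a poset whose comparability graph $G_{\cP}$ has no $K_t$-minor. By Theorem~\ref{thm:bomin} there is an absolute constant $C'>0$ with $\bo(G_{\cP})\le C' t^2\log t$. Applying the Adiga--Bhowmick--Chandran inequality to $\cP$ then gives
\[
\di(\cP)\le 2\,\bo(G_{\cP})\le 2C' t^2\log t.
\]
Taking $C=2C'$ yields the claimed bound and completes the proof.

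Since both inputs are already available, there is essentially no obstacle specific to this statement: all of the mathematical content lies in Theorem~\ref{thm:bomin} (which itself rests on the connection between boxicity and generalized coloring numbers) and in the poset-to-graph translation of Adiga, Bhowmick and Chandran. The one point to verify is that the factor $2$ in the inequality $\di(\cP)\le 2\bo(G_{\cP})$ is truly absolute, independent of $\cP$ and $t$, so that absorbing it into $C$ is legitimate; this is indeed the case. One might ask whether a more direct argument, building linear extensions of $\cP$ straight from an interval representation of $G_{\cP}$, could remove this factor of $2$, but for the stated asymptotic bound such a refinement is unnecessary.
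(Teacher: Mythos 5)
Your proposal is correct and is precisely the paper's own argument: the authors derive Theorem~\ref{thm:dibomin} by combining Theorem~\ref{thm:bomin} with the Adiga--Bhowmick--Chandran inequality $\di(\cP)\le 2\,\bo(G_{\cP})$, absorbing the factor $2$ into the constant exactly as you do.
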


It should be noted that while Theorem~\ref{thm:bomin} directly implies
Theorem~\ref{thm:dibomin} (using the result of~\cite{ABC10} mentioned
above), deducing Theorem~\ref{thm:bomin}  from
Theorem~\ref{thm:dibomin} does not look as straightforward. Note also that a direct proof
of Theorem~\ref{thm:dibomin} can be obtained along the same lines as
that of Theorem~\ref{thm:bomin} (see~\cite{Wie17}).

\smallskip

Our result is based on a connection between
the boxicity of graphs and their weak 2-coloring number (defined in
the next section). Thus our result can also be seen as a connection
between the dimension of posets and the weak 2-coloring number of
their comparability graphs. Interestingly, similar connections between
the dimension of posets and weak colorings of their \emph{cover graphs}
have recently been discovered. The cover graph of a poset $\cP$ can
be seen as a minimal spanning subgraph of the comparability graph
$G_\cP$ of
$\cP$, from which $G_\cP$ can be recovered via transitivity. In
particular, the cover graph of $\cP$ can be much sparser than the
comparability graph of $\cP$ (for a chain, the first is a path while the
second is a complete graph). However, for posets of height two, the
comparability graph and the cover graph coincide.

It was proved by Joret, Micek, Ossona de Mendez and
Wiechert~\cite{JMOW17} that if $\cP$ is a poset of height at most $h$,
and the cover graph of $\cP$ has weak $(3h-3)$-coloring number at most
$k$, then $\di(P)\le 4^k$. For posets $\cP$ of height $h=2$, this
implies that the dimension is at most $4^k$, where $k$ is the weak
3-coloring number of the comparability graph of $\cP$. This will be
significantly improved in Section~\ref{sec:wcol} (see Theorem~\ref{thm:bowco}).

\medskip

The {\em adjacency poset} of a graph $G=(V,E)$, introduced by Felsner
and Trotter~\cite{FT00}, is the poset $(W, \leq)$ with $W=V \cup V'$,
where $V'$ is a disjoint copy of $V$, and such that $u \leq v$ if and
only if $u=v$, or $u\in V$ and $v\in V'$ and $u,v$ correspond to
adjacent vertices of $G$.  It was proved in~\cite{EJ13} that
for any graph $G$, the dimension of the adjacency poset of $G$ is at
most $2\, \bo(G)+\chi(G)+4$, where $\chi(G)$ is the chromatic number
of $G$. Since graphs with no $K_t$ minor have chromatic number $O(t \sqrt{\log
t})$~\cite{Kos84, Tho84}, this implies that the dimension of the
adjacency poset of any graph with no $K_t$-minor is $O(t^2\log t)$.

\section{Weak coloring}\label{sec:wcol}

Let $G$ be a graph and let $\Pi(G)$ denote the set of linear orders on $V(G)$.
Fix some linear order $\pi\in\Pi(G)$ for the moment.
We write $x<_\pi y$ if $x$ is smaller than $y$ in $\pi$, and we write
$x\le _\pi y$ if $x=y$ or $x<_\pi y$. For a set $S$ of vertices, $x
\le_\pi S$ means that $x\le _\pi y$ for every vertex $y\in S$. When
$\pi$ is clear from the context, we omit the subscript $\pi$ and write $<$ and $\le$ instead of
$<_\pi $ and $\le _\pi $.

For an integer
$r\ge 0$, we say that a
vertex $u$ is weakly $r$-reachable from $v$ in $G$ if there is a path $P$ of
length (number of edges) at most $r$ between $u$ and $v$, such that $u\le _\pi P$. In
particular, $u$ is
weakly 2-reachable from $v$ if $u\le _\pi v$, and either $u=v$, or $u$ and
$v$ are adjacent, or $u$ and $v$ have a common neighbor $w$ with
$u<_\pi w$.

\smallskip

The \emph{weak $r$-coloring number} of a graph $G$, denoted by
$\wco_r(G)$, is the minimum (over all linear orders $\pi\in \Pi(G)$) of the maximum (over all vertices $v$ of $G$) of the
number of vertices that are weakly $r$-reachable from $v$ with respect
to $\pi$. For more background on weak coloring numbers, the reader is
referred to~\cite{NO12}.

In this section we will consider the following slight variant of weak
coloring: let $\wco^*_r(G)$ be the minimum $k$ such that for some
linear order $\pi\in \Pi(G)$, there exists a coloring of the vertices of $G$ such
that for any vertex $v$ of $G$, all the vertices distinct from $v$ that are weakly
$r$-reachable from $v$ have a color that is distinct from that of
$v$. Note that the greedy algorithm trivially shows that for any graph
$G$ and integer $r\ge 0$, $\wco^*_r(G)\le \wco_r(G)$.

\smallskip

There is an interesting connection between $\wco^*_2(G)$ and the
\emph{star-chromatic number} $\chi_s(G)$ of $G$, which is defined as
the minimum number of colors in a proper coloring of the vertices of $G$, such that
any 4-vertex path contains at least 3 distinct colors. It was observed
in~\cite{NO03} that $\chi_s(G)$ can be equivalently defined as the
minimum number of colors in a coloring of some orientation of $G$,
such that any two vertices are required to have distinct colors if
they are connected by an edge, a directed 2-edge
path, or a 2-edge-path where the two edges are directed toward the
ends of the paths. An anonymous referee observed that if we add the
constraint that the orientation of $G$ is acyclic, the corresponding
graph parameter is precisely $\wco^*_2(G)$.

\smallskip

It is known that there exist graphs $G$ of unbounded boxicity with
$\wco_1(G)\le 2$~\cite{ABC10}. We now prove that the boxicity is bounded
by a linear function of  the weak 2-coloring number.

\begin{thm}\label{thm:bowco}
For any graph $G$, $\bo(G)\le 2 \,\wco^*_2(G)$.
\end{thm}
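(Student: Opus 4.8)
The plan is to realize $G$ directly as the intersection of $2k$ interval graphs, where $k=\wco^*_2(G)$, by giving each vertex a box in $\RR^{2k}$ whose coordinates come in $k$ pairs, one pair per colour. Fix a linear order $\pi$ and a colouring $\phi$ with $k$ colours witnessing $\wco^*_2(G)$; write $C_i=\phi^{-1}(i)$ and identify each vertex with its position in $\pi$. It suffices to build, for each colour $i$, two interval graphs $O_i,O_i'$ such that every edge of $G$ is an edge of each of them, and so that collectively all $2k$ graphs separate every non-edge; then their intersection is exactly $G$. I orient each non-edge $\{x,y\}$ with $x<_\pi y$ and make colour $\phi(x)$ responsible for separating it, so each non-edge is handled in the two graphs of the colour of its $\pi$-smaller endpoint.

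For a colour $i$ I would use two ``ray'' graphs (unbounded intervals are harmless for interval graphs). In $O_i$, each $v\notin C_i$ gets the right ray $[\lambda_i(v),\infty)$, where $\lambda_i(v)$ is the position of the $\pi$-smallest neighbour of $v$ lying in $C_i$ (and $\lambda_i(v):=+\infty$ if $v$ has no neighbour in $C_i$), while each $v\in C_i$ gets a bounded interval ending at its own position, the left endpoints being chosen so that the intervals of $C_i$ are pairwise disjoint. The graph $O_i'$ is the mirror image: vertices of $C_i$ get right rays $[\mathrm{pos}(v),\infty)$, and each $v\notin C_i$ gets the left ray $(-\infty,\rho_i(v)]$ with $\rho_i(v)$ the position of the $\pi$-largest neighbour of $v$ in $C_i$. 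Edge preservation is the easy part: two vertices outside $C_i$ always share the infinite end of their rays; if $v\notin C_i$ is adjacent to some $u\in C_i$ then $u$ is counted in $\lambda_i(v)$ (resp.\ $\rho_i(v)$), so the ray of $v$ reaches the position of $u$; and since $C_i$ is independent, making its intervals pairwise disjoint separates no edge.

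The heart of the matter is separating non-edges. A same-colour non-edge $\{x,y\}\subseteq C_i$ is already separated in $O_i$, as the $C_i$-intervals are pairwise disjoint. For a non-edge $\{x,y\}$ with $x\in C_i$, $x<_\pi y$ and $y\notin C_i$, I claim $O_i$ or $O_i'$ separates it. If neither does, then failure of $O_i$ yields a neighbour $a\in C_i$ of $y$ with $a<_\pi x$, and failure of $O_i'$ yields a neighbour $b\in C_i$ of $y$ with $x<_\pi b$. Now $a,b\in C_i$ share the common neighbour $y$; since $\phi(a)=\phi(b)=i$, the vertex $a$ cannot be weakly $2$-reachable from $b$, which (as $a<_\pi b$) forces every common neighbour of $a$ and $b$ — in particular $y$ — to satisfy $y<_\pi a$. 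This contradicts $a<_\pi x<_\pi y$, so this ``straddling'' configuration cannot occur and the non-edge is separated.

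I expect this no-straddling step to be the main obstacle, and it is exactly where the weak \emph{2}-colouring number is needed rather than $\wco_1$: the adjacency-only version of the two rays separates $\{x,y\}$ only when the $C_i$-neighbours of $y$ all lie on one side of $x$, and without control over common neighbours one cannot exclude $C_i$-neighbours of $y$ on both sides of $x$ — consistently with the quoted fact that bounded $\wco_1$ does not bound boxicity. The remaining points, namely that no edge is separated in any of the $2k$ coordinates and that every non-edge falls into one of the two cases above for $i=\phi(\min_\pi\{x,y\})$, are routine once the ray intervals and the no-straddling lemma are in place, and yield $\bo(G)\le 2k=2\,\wco^*_2(G)$.
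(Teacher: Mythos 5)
Your proposal is correct and follows essentially the same route as the paper's proof: the same pairing of two interval graphs per colour class (your left/right rays are just a reparametrisation of the paper's intervals $[k,n]$ and their order-reversed counterparts), and the same key step ruling out a colour-$i$ neighbour of $y$ on each side of $x$ via weak $2$-reachability along the path through the common neighbour. No gaps.
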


\begin{proof}
Let $G$ be a graph on $n$ vertices and let $c:=\wco_2^*(G)$.
 By definition, there exist a linear order $\pi$ on $V(G)$ and a vertex coloring $\phi$ with colors from the set $\{1,\ldots,c\}$, such that whenever a vertex $u$ is weakly $2$-reachable from another vertex $v$ with respect to $\pi$, then $\phi(u)\neq \phi(v)$.
 
 We aim to show that $G$ is the intersection of $2c$ interval graphs $I_1,\ldots,I_{2c}$.
 We associate to each color $i\in[c]$ the two interval graphs $I_{i}$ and $I_{i+c}$.
 Fix color $i$ for the moment.
 We explicitly define the intervals representing the vertices of $V(G)$ in $I_{i}$ and $I_{i+c}$, respectively. 
 Consider the vertices $v_1,\ldots,v_\ell$ that received color $i$ by $\phi$.
 By relabelling the vertices if needed, we may assume that $v_1<\cdots <v_\ell$ holds in $\pi$.
 
 We start with $I_{i}$.
 Here, we map $v_j$ ($1\leq j\leq \ell$) to the point $\{j\}$; and for
 every vertex $u$ that is not colored with $i$, we consider two cases:
 if $u$ has no neighbor colored $i$ we map $u$ to the point $\{n\}$,
 and otherwise we consider the minimal $k$ ($1\leq k\leq \ell$) such that $u$ is adjacent to $v_k$, and then we map $u$ to the interval $[k,n]$.
 Notice that $I_{i}$ is a supergraph of $G$.
 
 We now proceed with $I_{i+c}$.
 Here, we reverse the order of the vertices with color $i$, that is,
 we map $v_j$ ($1\leq j\leq\ell$) to the point $\{\ell-j+1\}$; and for
 every vertex $u$ not colored with $i$, we again map $u$ to the point
 $\{n\}$ if $u$ has no neighbor colored $i$, and otherwise we consider the maximal $k'$ ($1\leq k'\leq \ell$) such that $u$ is adjacent to $v_{k'}$, and then we map $u$ to the interval $[\ell-k'+1,n]$.
 Notice that $I_{i+c}$ is also a supergraph of $G$.
 In Figure~\ref{fig:box-rep} the two interval graphs $I_{i}$ and
 $I_{i+c}$ are illustrated by their induced box representation in
 dimensions $i$ and $i+c$.

\begin{figure}[htb]
 \centering
 \includegraphics[height=8cm]{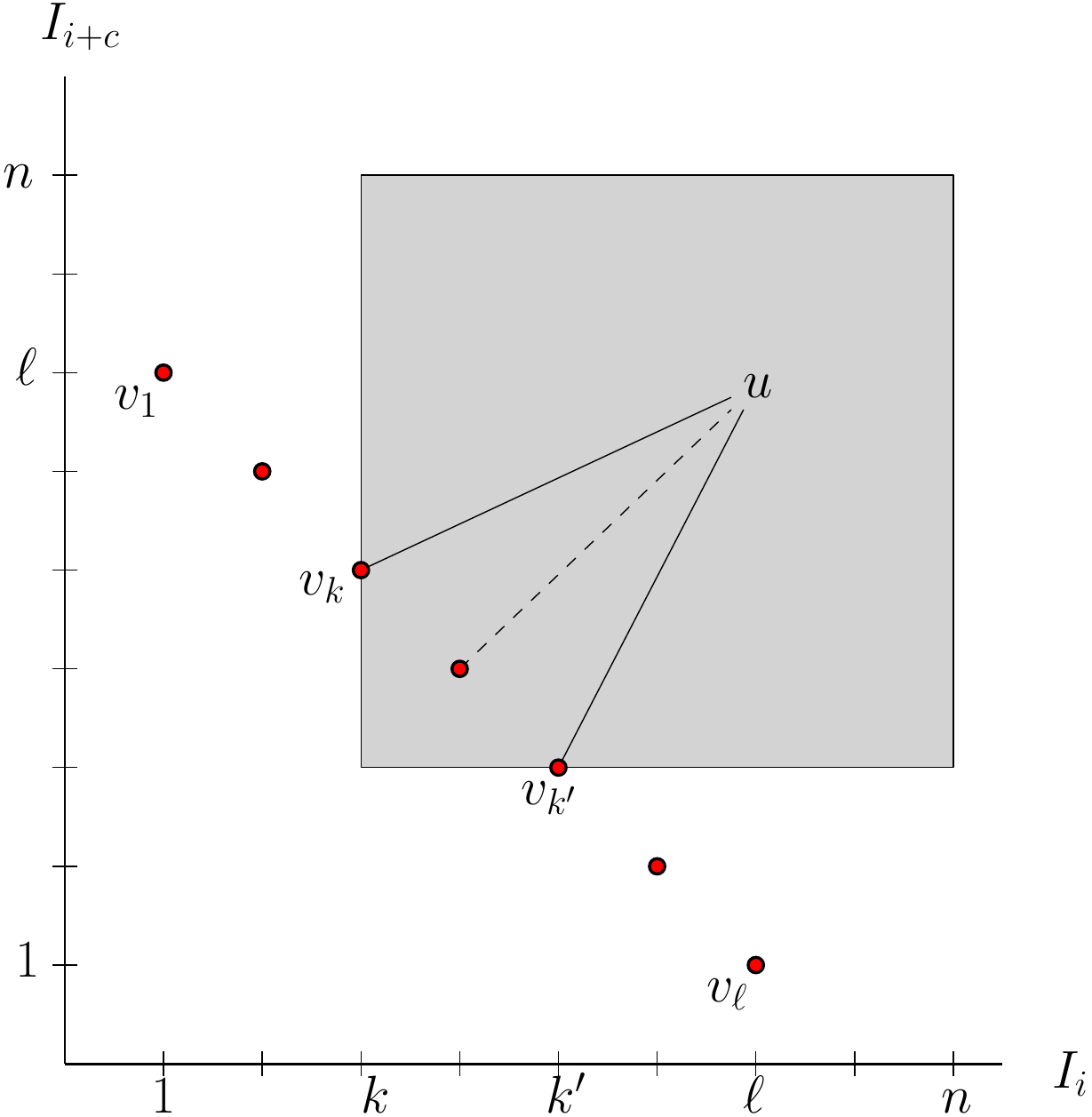}
 \caption{Illustration of $I_{i}$ and $I_{i+c}$ as the corresponding box representation. Vertices with color $i$ are mapped to the red points. Projections onto the two axis yield the intervals representing the vertices.}
 \label{fig:box-rep}
\end{figure}

 Next, we show that $G$ is the intersection of $I_1,\ldots,I_{2c}$.
 Since all involved interval graphs are supergraphs of $G$, we only
 need to show that for each pair of distinct non-adjacent vertices $u,v\in V(G)$ there is an interval graph $I_j$ ($1\leq j\leq 2c$) in which the two vertices are mapped to disjoint intervals.
 We may assume without loss of generality that $u<v$ in $\pi$.
 If $u$ and $v$ have the same color $i$, then their intervals are distinct points in $I_{i}$ (and also in $I_{i+c}$) and thus disjoint.
 So suppose that $u$ and $v$ have distinct colors $i$ and $j$, respectively.
 We assume for a contradiction that the intervals of $u$ and $v$ intersect in every interval graph $I_1,\ldots,I_{2c}$.
 This holds in particular in $I_{i}$ and $I_{i+c}$ (where $u$ is
 mapped to a point and $v$ to an interval containing point $\{n\}$);
 and from this we deduce that there are distinct vertices $x$ and $y$ with color $i$ such that $v$ is adjacent to both of them and $x<u<y$ in $\pi$.
 However, since we assumed that $u<v$ in $\pi$, this implies
 that $x<u<y<v$ or $x < u < v < y$. It follows that $x$ is weakly $2$-reachable from $y$ with respect to $\pi$, as is witnessed by the path $x,v,y$.
 This is a contradiction to the properties of the coloring $\phi$.
 
 We conclude that $G$ is indeed the intersection of $I_1,\ldots,I_{2c}$, and thus $\bo(G)\leq 2c$. 
\end{proof}

It is known that planar graphs have weak 2-coloring number (and thus
$\wco_2^*)$ at most 30~\cite{HOQRS17}, so this implies that their
boxicity is at most 60 (this is significantly worse than the result of
Thomassen~\cite{Tho86}, who proved that planar graphs have boxicity at
most 3). Given two integers $s$ and $t$, let $K_{s,t}^*$ denote the
complete join of $K_s$ and $\overline{K_t}$. Using recent bounds on weak 2-coloring numbers by Van den Heuvel
and Wood (Proposition 28 in~\cite{HW19}), Theorem~\ref{thm:bowco}
directly implies the following.

\begin{thm}
If $G$ does not contain $K_{s,t}^*$ as a minor, then $\bo(G)\le 5s^3(t-1)$.
\end{thm}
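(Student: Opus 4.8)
The plan is to obtain this as an immediate corollary of Theorem~\ref{thm:bowco}, fed with a bound on the weak $2$-coloring number of $K_{s,t}^*$-minor-free graphs. Recall that Theorem~\ref{thm:bowco} gives $\bo(G)\le 2\,\wco^*_2(G)$, and that the greedy argument recorded in Section~\ref{sec:wcol} yields $\wco^*_2(G)\le \wco_2(G)$ for every graph $G$. Thus the entire task reduces to estimating $\wco_2(G)$ under the hypothesis that $G$ excludes $K_{s,t}^*$ as a minor; no new combinatorial construction is required beyond the one already carried out in the proof of Theorem~\ref{thm:bowco}.

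For that estimate I would quote Van den Heuvel and Wood directly: Proposition~28 of~\cite{HW19} asserts that every graph $G$ with no $K_{s,t}^*$-minor satisfies $\wco_2(G)\le \tfrac{5}{2}s^3(t-1)$. Concatenating the three inequalities then gives
\[
\bo(G)\le 2\,\wco^*_2(G)\le 2\,\wco_2(G)\le 5s^3(t-1),
\]
which is exactly the claimed bound.

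Since each step is a straight substitution, there is essentially no obstacle to overcome: the two substantive ingredients—the structural bound on $\wco_2$ for $K_{s,t}^*$-minor-free graphs and the inequality relating boxicity to $\wco^*_2$—are already in hand, the first externally from~\cite{HW19} and the second from Theorem~\ref{thm:bowco}. The only point that deserves a moment's attention is to confirm that the parameter bounded in Proposition~28 is genuinely the \emph{weak} $2$-coloring number $\wco_2$ (and not, say, the strong coloring number $\co_2$ or a one-step variant), so that the inequality $\wco^*_2\le\wco_2$ can be invoked verbatim and no additional constant factor is lost along the chain.
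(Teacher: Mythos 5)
Your proposal is correct and is exactly the paper's argument: the authors likewise obtain the bound by combining Theorem~\ref{thm:bowco} with the inequality $\wco^*_2(G)\le\wco_2(G)$ and the bound $\wco_2(G)\le\tfrac52 s^3(t-1)$ from Proposition~28 of~\cite{HW19}. Your closing caution about checking that the cited proposition bounds the weak (rather than strong) $2$-coloring number is well placed but raises no actual issue here.
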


In particular, when $s$ is a constant, the boxicity is linear in
$t$. The result also directly implies that the boxicity of
$K_t$-minor-free graphs is $O(t^3)$. This is also the order of magnitude of
the best known bound on the weak 2-coloring number of $K_t$-minor-free
graphs. To improve the bound on the boxicity of  $K_t$-minor-free
graphs, we will now use  $\wco^*_2$
as an alternative to $\wco_2$. We believe that considering
$\wco^*_r$ instead of $\wco_r$ might yield to significant improvements
in other problems as well.

\smallskip

It is proven in~\cite{NO03} that if every minor of a graph $G$ has average degree at most $d$, then the star-chromatic number $\chi_s(G)$ is $O(d^2)$.
A closer look at the proof contained in the paper reveals that this
bound also holds for $\wco_2^*(G)$. We will indeed prove that a slightly stronger
statement holds. Given a graph $H$, a \emph{subdivision} of $H$ is a graph obtained
from $H$ by subdividing some of the edges of $H$ (i.e. replacing
them by paths). The subdivision is said to be an \emph{$(\le
\ell)$-subdivision} if each edge is subdivided at most $\ell$ times (i.e. replaced
by a path on at most $\ell+1$ edges). Given a half-integer $r\ge 0$, we denote by $\widetilde{\nabla}_r(G)$ the
maximum average degree of a graph $H$ such that $G$ contains an $(\le
2r)$-subdivision of $H$ as a subgraph. Given an integer $r\ge 0$, let
$\nabla_r(G)$ be the maximum average degree of a graph that can be
obtained from $G$ by contracting disjoint balls of radius at most $r$.
For more on these notions and their
connections with generalized coloring parameters, the
reader is referred to the monograph~\cite{NO12}.

We now prove the following (the first part of the result is a simple
rewriting of the original argument of~\cite{NO03}, while the second
part was suggested to us by Sebastian Siebertz).

\begin{thm}
  For any graph $G$, $\wco_2^*(G)\le 3 \nabla_0(G)^2+ 1+\min(\nabla_0(G) \nabla_1(G),\nabla_0(G)^2
  \widetilde{\nabla}_{1/2}(G))$.
\end{thm}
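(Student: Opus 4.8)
The plan is to reformulate $\wco_2^*(G)$ as a chromatic number. For a fixed linear order $\pi\in\Pi(G)$, let $H_\pi$ be the graph on $V(G)$ in which two vertices $u<_\pi v$ are adjacent exactly when $u$ is weakly $2$-reachable from $v$. By the definition of $\wco_2^*$, a coloring is admissible for $\pi$ if and only if it is a proper coloring of $H_\pi$, so $\wco_2^*(G)\le \chi(H_\pi)$ for every $\pi$; and since a graph of maximum average degree $m$ is $\lfloor m\rfloor$-degenerate, we have $\chi(H_\pi)\le 1+\nabla_0(H_\pi)$. It thus suffices to exhibit one order $\pi$ for which $\nabla_0(H_\pi)$ is small. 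First I would take $\pi$ to be a degeneracy order of $G$ and orient each edge from its smaller to its larger endpoint; writing $N^-(x)$ for the set of neighbors of $x$ preceding it in $\pi$, every vertex then satisfies $|N^-(x)|\le\nabla_0(G)$, because the degeneracy of $G$ is at most its maximum average degree $\nabla_0(G)=\mathrm{mad}(G)$.

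Next I would split the edges of $H_\pi$ according to the witness $w$ of weak reachability: the original edges $E(G)$; the \emph{transitive} edges, where $u<_\pi w<_\pi v$ (a directed path $u\to w\to v$); and the \emph{fraternal} edges, where $v<_\pi w$ (a common out-neighbor $w$, i.e.\ $u\to w\leftarrow v$). Writing $T$ and $F$ for the transitive and fraternal graphs, one has $E(H_\pi)\subseteq E(G)\cup E(T)\cup E(F)$, and since $\nabla_0=\mathrm{mad}$ is subadditive over unions of edge sets, $\nabla_0(H_\pi)\le\nabla_0(G)+\nabla_0(T)+\nabla_0(F)$. The first two terms are elementary: each $v$ has at most $\sum_{w\in N^-(v)}|N^-(w)|\le\nabla_0(G)^2$ transitive back-neighbors, so $\nabla_0(T)\le 2\nabla_0(G)^2$, and (assuming $G$ has an edge, so $\nabla_0(G)\ge 1$) together with $\nabla_0(G)\le\nabla_0(G)^2$ this already accounts for the term $3\nabla_0(G)^2$, the additive $+1$ coming from $\chi(H_\pi)\le 1+\nabla_0(H_\pi)$. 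The whole difficulty is therefore concentrated in bounding $\nabla_0(F)$, and I would prove $\nabla_0(F)\le\nabla_0(G)\nabla_1(G)$ and $\nabla_0(F)\le\nabla_0(G)^2\widetilde{\nabla}_{1/2}(G)$ separately, which produces the minimum in the statement.

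For the $\nabla_1$ bound, fix $S\subseteq V(G)$; every edge of $F[S]$ has a witness $w$, and each witness spans a clique on $N^-(w)\cap S$, a set of size at most $\nabla_0(G)$. I would realize these edges as genuine edges of a depth-$1$ minor by contracting, around each used witness $w$, the star joining $w$ to $N^-(w)\cap S$ (a radius-$1$ ball), so that a single contraction turns a whole witness-clique into a star and thereby realizes $|N^-(w)\cap S|-1$ fraternal edges at the cost of one contraction; since $\binom{k}{2}/(k-1)=k/2\le\nabla_0(G)/2$, only the single factor $\nabla_0(G)$ (not $\nabla_0(G)^2$) is lost, and comparison of the resulting average degree with $\nabla_1(G)$ gives $\nabla_0(F)\le\nabla_0(G)\nabla_1(G)$. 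For the $\widetilde{\nabla}_{1/2}$ bound, I would instead select fraternal edges of $F[S]$ greedily with pairwise distinct witnesses: as one witness accounts for at most $\binom{\nabla_0(G)}{2}$ edges, this keeps an $\Omega(\nabla_0(G)^{-2})$-fraction of the edges, and the corresponding $2$-paths form a $(\le 1)$-subdivision inside $G$ of a graph $H'$ with $V(H')\subseteq S$, whence $\mathrm{avgdeg}(H')\le\widetilde{\nabla}_{1/2}(G)$ and $\nabla_0(F)\le\nabla_0(G)^2\widetilde{\nabla}_{1/2}(G)$.

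The main obstacle is the disjointness bookkeeping in these last two steps. In the $\nabla_1$ argument the contracted radius-$1$ balls must be vertex-disjoint and must avoid the branch vertices they connect, so witnesses lying in $S$ and witnesses competing for the same contraction center have to be handled (for instance by an averaging argument or a system of distinct representatives); in the $\widetilde{\nabla}_{1/2}$ argument the internal subdivision vertices must be distinct from one another and from $S$. Getting these disjointness conditions exactly right — rather than the underlying averaging, which is routine — is what I expect to require the most care, and it is precisely what pins down the constants; once they are in place, the factor $3$ and the additive $+1$ fall out of the elementary counts above.
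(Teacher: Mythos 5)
Your proposal follows essentially the same route as the paper's proof: reformulate $\wco_2^*$ as the chromatic number of the augmentation of $G$ by transitive and fraternal edges along a degeneracy order, bound the transitive part by $\nabla_0(G)^2$ per vertex, and charge the fraternal part either to a depth-$1$ minor or to a $(\le 1)$-subdivision of a graph on $S$. The bookkeeping you correctly single out as the crux is resolved in the paper as follows: fraternal edges whose witness $w$ lies in $S$ are not sent to the minor argument at all but counted directly, at most $\binom{k}{2}$ per such $w$, and it is exactly this term, added to $k/2+k^2$ per vertex, that yields the clean constant $3k^2$; your accounting $k+2k^2+\nabla_0(F)$ would instead require $\nabla_0(F)\le k\,\nabla_1(G)$ to hold for \emph{all} fraternal edges, witnesses in $S$ included, which is precisely where the disjointness of the contracted balls breaks down (a witness in $S$ can simultaneously be a leaf of one star and the centre of another). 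For witnesses $w\notin S$, the paper labels the at most $k$ edges from $w$ back into $S$ with distinct integers from $\{1,\dots,k\}$, so that for each fixed label the labelled edges form vertex-disjoint stars centred in $S$ with leaves outside $S$ (contracting them gives the $k\,\nabla_1(G)$ bound), and for each fixed label pair the witnesses of distinct fraternal edges are automatically distinct and outside $S$ (giving the $k^2\,\widetilde{\nabla}_{1/2}(G)$ bound); this replaces your greedy selection and system of distinct representatives. One phrasing in your sketch would actually fail as written: contracting the entire star $\{w\}\cup(N^-(w)\cap S)$ into a single vertex merges the branch vertices and realizes none of the clique edges; what you want, and what the labelling achieves, is to contract each witness into a single chosen back-neighbour in $S$.
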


\begin{proof}
Any subgraph of $G$ has average
degree at most $k=\nabla_0(G)$ and in particular $\wco_1(G)\le k$ (i.e. $G$ is
$k$-degenerate). Let $\pi\in \Pi(G)$ be an order such that for any
vertex $u$, at most $k$ neighbors $v$ of $u$ are such that $v<_\pi
u$ (in the remainder of the proof, we write $<$ instead of $<_\pi$
whenever there is no risk of confusion). Let $H$ be obtained from $G$ by adding an edge between $u$ and $w$,
for each $u<v<w$ such that $uv$ and $vw$ are edges of $G$, and by
adding an edge  between $x$ and $y$ for each $x<y<z$ such that $xz$
and $yz$ are edges of $G$. Observe that $\wco_2^*(G)\le \chi(H)$,
where $\chi(H)$ denotes the chromatic number of $H$. Thus, it is sufficient to
prove that $H$ is $c$-colorable, with $c=3k^2+1+\min(k \nabla_1(G) ,k^2
\widetilde{\nabla}_{1/2}(G))$. In order to do so,
we will indeed prove that any subgraph of $H$ has average degree at
most $c-1$, which implies that $H$ is $(c-1)$-degenerate and thus
$c$-colorable. Consider a subset $A$ of vertices of $G$. Each edge $uv$ of $H[A]$, with $u<v$,  corresponds to (at least) one of
these cases:

\begin{itemize}
\item $uv$ is an edge of $G$ (there are at most $k|A|/2$ such edges,
  since all subgraphs of $G$ have average degree at most $k$).
  \item there is a vertex $x$ in $G$ (not necessarily in $A$) with
    $u<x<v$, such that $ux$ and $xv$ are edges of $G$ (there are at
    most $k^2|A|$ such edges, by definition of $\pi$).
  \item there is a vertex $w \in A$ with $u<v<w$ such that $uw$ and
    $vw$ are edges of $G$ (there are at most $|A|k(k-1)/2$ such edges,
    since for each $w$ in $A$ there are at most $k(k-1)/2$ pairs of
    neighbors of $w$ in $G$ preceding it in $\pi$).
    \item there is a vertex $w \notin A$ with $u<v<w$ such that $uw$ and
    $vw$ are edges of $G$ (in this case let us say that the edge $uv$
    of $H[A]$ is \emph{special}).
\end{itemize}

It follows from the observations above that there are at most $3k^2|A|/2$
non-special edges in $H[A]$.
We now bound the number of special edges $uv$ of $H[A]$ in two
different ways. For each
vertex $x\not\in A$, consider the (at most $k$) edges $yx$ of $G$ with
$y\in A$ and $y<x$, and label them with distinct integers from the set
$\{1,\ldots,k\}$. For each $1\le i \le k$, observe that the edges
labelled $i$ form disjoint unions of stars, centered in vertices of
$A$. For $1\le i \le k$, let $G_i$ be the graph obtained from $G$ by
contracting each of these stars labelled $i$ into a single
vertex. Note that each special edge of $H[A]$ corresponds to an edge
in at least one of the graphs $G_i[A]$. Since each $G_i[A]$ was
obtained from $G$ by contracting disjoint balls of radius at most 1,
each $G_i[A]$ contains at most $\nabla_1(G)|A|/2$ edges, and thus
$H[A]$ has at most $k\nabla_1(G)|A|/2$ special edges. It follows that
$H$ has average degree at most $3k^2+k\nabla_1(G)$, as desired.

For each pair $i,j$ with $1\le i<j\le k$, consider the graph $G_{ij}$
with vertex set $A$, such that any two vertices $u,v\in A$ are connected by an
edge if in $G$, $u$ and $v$ are connected by a path on 2 edges, one
labelled $i$ and the other labelled $j$. Observe that each special
edge of $H[A]$ is an edge of some $G_{ij}$, and $G$ contains a
1-subdivision of each $G_{ij}$. It follows that $H[A]$ has at most
$k^2  \widetilde{\nabla}_{1/2}(G)|A|/2$ special edges, and thus
average degree at most $3k^2+k^2 \widetilde{\nabla}_{1/2}(G)$, as desired.
  \end{proof}
  
Since graphs with no $K_t$-minor have average degree
$\mathcal{O}(t\sqrt{\log t})$~\cite{Kos84,Tho84}, it follows that
these graphs have $\wco^*_2(G)=O(t^2\log t)$. We thus obtain
Theorem~\ref{thm:bomin} as a direct consequence of
Theorem~\ref{thm:bowco}.

\medskip

A classic result~\cite{BT98,KS94} states that graphs with no subdivision of $K_t$
have average degree $O(t^2)$. So, for these graphs $\nabla_0$ and
$\widetilde{\nabla}_{1/2}$ are of order $O(t^2)$. An immediate consequence is the
following.

\begin{cor}\label{cor:bominsubdv}
There is a constant $C>0$ such that if $G$ has no subdivision of
$K_t$, then $\bo(G)\le Ct^6 $.
\end{cor}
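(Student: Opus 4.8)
The plan is to read off the bound by chaining Theorem~\ref{thm:bowco} with the density estimate on $\wco_2^*$ proved just above, and then to invoke the classical theorem of~\cite{BT98,KS94} to control the two relevant sparsity parameters. The one genuine decision is to use the \emph{topological} term $\nabla_0(G)^2\,\widetilde{\nabla}_{1/2}(G)$ of the minimum rather than $\nabla_0(G)\,\nabla_1(G)$: the hypothesis bounds topological structures in $G$, but contracting balls of radius~$1$ can create dense shallow minors with no witnessing $K_t$-subdivision, so $\nabla_1(G)$ is not obviously controlled.

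Concretely, I would first combine the two inequalities already at hand. Theorem~\ref{thm:bowco} gives $\bo(G)\le 2\,\wco_2^*(G)$, and the preceding theorem (keeping the second argument of the minimum) gives
\[
\wco_2^*(G)\le 3\nabla_0(G)^2+1+\nabla_0(G)^2\,\widetilde{\nabla}_{1/2}(G).
\]
It therefore suffices to prove $\nabla_0(G)=O(t^2)$ and $\widetilde{\nabla}_{1/2}(G)=O(t^2)$: the dominant term becomes $\nabla_0(G)^2\,\widetilde{\nabla}_{1/2}(G)=O(t^4)\cdot O(t^2)=O(t^6)$, while $3\nabla_0(G)^2=O(t^4)$ is of lower order, and the claimed $\bo(G)\le Ct^6$ follows.

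The bound on $\nabla_0(G)$ is immediate: every subgraph of $G$ is again free of a $K_t$-subdivision, hence has average degree $O(t^2)$ by~\cite{BT98,KS94}, so $\nabla_0(G)=O(t^2)$. The crux is bounding $\widetilde{\nabla}_{1/2}(G)$. Let $H$ attain this parameter, so that $G$ contains some $(\le 1)$-subdivision $H'$ of $H$ as a subgraph. I claim $H$ has no subdivision of $K_t$: if it did, replacing each edge of such a $K_t$-subdivision by its corresponding (length one or two, internally disjoint) path in $H'$ would yield a subdivision of a subdivision of $K_t$ --- that is, again a subdivision of $K_t$ --- sitting inside $H'\subseteq G$, contradicting the hypothesis. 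Hence $H$ is $K_t$-subdivision-free, and by~\cite{BT98,KS94} its average degree, which equals $\widetilde{\nabla}_{1/2}(G)$, is $O(t^2)$.

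The main obstacle is exactly this last pullback step: one must check that passing to a $(\le 1)$-subdivision carries topological clique minors forward, so that the sparsity of $G$ transfers to \emph{every} graph $H$ witnessing $\widetilde{\nabla}_{1/2}(G)$. This is both the heart of the argument and the reason the topological term of the minimum is the right one to keep, since no analogous transfer is available for the shallow-minor quantity $\nabla_1(G)$. Assembling the four bounds yields $\bo(G)=O(t^6)$, as claimed.
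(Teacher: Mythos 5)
Your proposal is correct and follows essentially the same route as the paper: the paper also deduces the corollary by noting that the Bollob\'as--Thomason / Koml\'os--Szemer\'edi bound gives $\nabla_0(G)=O(t^2)$ and $\widetilde{\nabla}_{1/2}(G)=O(t^2)$ (the latter via exactly the pullback of topological minors through a $(\le 1)$-subdivision that you spell out), and then chains this through the $\wco_2^*$ bound and Theorem~\ref{thm:bowco}. Your write-up merely makes explicit the transfer step and the choice of the topological term of the minimum, both of which the paper leaves implicit.
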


\section{Strong coloring and circular-arc
  graphs}\label{sec:scol}\bigskip

The purpose of this section is to prove that if we consider a slightly
larger class of graphs (circular-arc graphs instead of interval
graphs), we can gain a multiplicative factor of $\log t$ in Theorem~\ref{thm:bomin}.

\medskip

A \emph{circular interval} is an interval of the unit circle, and a \emph{circular-arc graph} is the intersection graph of
a family of circular intervals. Equivalently, we can define a
\emph{circular interval} of $\mathbb{R}$ as being either an interval of
$\mathbb{R}$, or the (closed) complement of an interval of
$\mathbb{R}$. Note that this defines the same intersection graphs, and
we will use whatever formulation is the most convenient, depending on
the situation.

\medskip

The \emph{circular dimension} of a graph $G$, denoted by
$\di^\circ(G)$, is the minimum integer $k$ such that $G$ can be
represented as the intersection of $k$ circular-arc graphs. This
parameter was introduced by Feinberg~\cite{Fei79}. Since
every interval graph is a circular-arc graph, $\di^\circ(G)\le \bo(G)$ for any
graph $G$.

\medskip

Let $G$ be a graph, let $\pi\in \Pi(G)$, and let $r\ge 0$ be an integer.
Following~\cite{KY03}, we say that a
vertex $u$ is \emph{strongly $r$-reachable} from $v$ if there is a path $P$ of
length at most $r$ between $u$ and $v$, such that $u\le_\pi P$ and $v
\le_\pi P-u$. In
particular, $u$ is
strongly 2-reachable from $v$ if $u\le_\pi v$, and either $u=v$, or $u$ and
$v$ are adjacent, or $u$ and $v$ have a common neighbor $w$ with
 $u<_\pi v<_\pi w$.

The \emph{strong $r$-coloring number} of a graph $G$, introduced
in~\cite{KY03} and denoted by
$\co_r(G)$, is the minimum (over all linear orders $\pi \in \Pi(G)$) of the maximum (over all vertices $v$ of $G$) of the
number of vertices that are strongly $r$-reachable from $v$.

\begin{thm}\label{thm:cagco}
For any graph $G$,
$\di^\circ(G) \le 3\co_2(G)$.
\end{thm}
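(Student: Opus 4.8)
The plan is to mimic the construction used in the proof of Theorem~\ref{thm:bowco}, but to exploit the extra freedom afforded by circular-arc graphs so as to save a factor of two per color, thereby replacing the $2c$ interval graphs by $3c$ circular-arc graphs where now $c=\co_2(G)$ rather than $\wco_2^*(G)$. First I would fix an optimal linear order $\pi$ witnessing $c=\co_2(G)$, so that every vertex $v$ has at most $c$ vertices strongly $2$-reachable from it. The key point is that strong $2$-reachability is a \emph{symmetric-looking} obstruction: if $u<_\pi v<_\pi w$ with $uw,vw\in E(G)$ then $u$ is strongly $2$-reachable from $v$ (via the path $u,w,v$), and this is exactly the configuration that created the conflict (the path $x,v,y$) in the proof of Theorem~\ref{thm:bowco}. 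So instead of coloring, I would try to build, for each vertex $v$, a representation that simultaneously separates $v$ from all the (at most $c$) non-neighbors $u$ that are strongly $2$-reachable from it.

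The natural approach is to assign to each vertex an ordinal label based on its position in $\pi$ and build circular intervals whose endpoints are governed by these labels. Concretely, I would first place the vertices on points of the real line according to $\pi$. For a single coordinate (interval graph) the trouble in the boxicity argument was that a vertex $v$ adjacent to two color-$i$ vertices $x<_\pi u <_\pi y$ would force its interval to straddle $u$ in \emph{both} the forward and the reversed copy, which is why two interval graphs per color were needed. The gain from circular-arc graphs is that a circular interval can be the complement of an ordinary interval, i.e.\ it can ``wrap around''; this lets a single circular-arc graph separate $v$ from $u$ whenever $v$'s relevant neighbors all lie on one side, and one can cover the remaining cases with fewer coordinates because wrapping replaces the need for a second, order-reversed copy. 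I would therefore try to show that three circular-arc graphs per ``class'' (rather than two interval graphs per color) suffice, where the classes are now indexed by the $\co_2$ structure rather than by a proper coloring; the factor $3$ versus $2$ is the cost of allowing wrap-around, but it is offset by the fact that $\co_2$ can be substantially smaller than $\wco^*_2$, since strong reachability is a weaker requirement than weak reachability.

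The main verification, exactly as in Theorem~\ref{thm:bowco}, is the separation claim: for every pair of non-adjacent $u<_\pi v$, at least one of the $3c$ circular-arc graphs maps them to disjoint circular intervals. I would argue by contradiction, assuming their circular intervals intersect in all $3c$ graphs, and extract from the forward/reversed/wrapped copies associated to the relevant class two common neighbors $x,y$ of $v$ lying on opposite sides of $u$ in $\pi$, recovering the configuration $x<_\pi u<_\pi y$ with $x<_\pi u<_\pi v$. The crucial difference from the weak-coloring argument is that here I must produce a configuration witnessing a \emph{strong} $2$-reachability (so that the path $x,v,y$ certifies that one of $x,y$ is strongly $2$-reachable from the other with all vertices of the path at least the minimum endpoint), which is where the definition of $\co_2$ must be used precisely rather than merely the definition of $\wco_2$.

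The hard part will be getting the arithmetic of the wrap-around to yield exactly $3\co_2(G)$ rather than, say, $4\co_2(G)$: I expect the bookkeeping of which coordinate separates which pair to be the genuine obstacle, since one must verify that the three circular-arc graphs per class jointly handle (i) same-class non-adjacent pairs, (ii) opposite-class pairs where $v$'s lower common neighbor with $u$ precedes $u$, and (iii) the symmetric case, without any uncovered pair slipping through. I would check that a small constant number of circular-arc coordinates per class genuinely suffices by a careful case analysis on the relative $\pi$-positions of $u$, $v$, and their common neighbors, ensuring that every non-adjacent pair is separated and that no edge of $G$ is accidentally destroyed (i.e.\ each circular-arc graph remains a supergraph of $G$).
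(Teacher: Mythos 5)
Your high-level plan --- keep the two interval graphs per class from Theorem~\ref{thm:bowco} and add a wrap-around circular-arc graph, for a total of $3c$ graphs with $c=\co_2(G)$ --- is the right one, but the two ingredients that make it work are missing or misstated. First, you explicitly propose to proceed ``instead of coloring'' and to index the classes ``by the $\co_2$ structure rather than by a proper coloring''; without a coloring there is no way to form only $c$ classes (your per-vertex representations would give $n$ graphs, not $3c$). The paper's proof greedily colors $V(G)$ along $\pi$ with at most $c$ colors so that every vertex $u\neq v$ strongly $2$-reachable from $v$ gets a color different from $v$'s, and this coloring is indispensable: it is exactly what guarantees that at most one neighbor of $v$ in a given color class $i$ precedes $v$ in $\pi$ (two such neighbors $a<_\pi b<_\pi v$ would make $a$ strongly $2$-reachable from $b$ via $v$, hence differently colored).

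Second, your separation argument hinges on ``producing a configuration witnessing a strong $2$-reachability'' from the common neighbors $x<_\pi u<_\pi y$ of $v$. This cannot close the argument: in the only case that survives the two interval graphs, $y$ necessarily comes \emph{after} $v$ in $\pi$ (otherwise $x$ and $y$ would both be color-$i$ neighbors preceding $v$, which the coloring already forbids), so the path $x,v,y$ witnesses only \emph{weak} $2$-reachability of $x$ from $y$, and no contradiction with a $\co_2$-based coloring follows. (Your parenthetical gloss of strong reachability, ``all vertices of the path at least the minimum endpoint,'' is in fact the weak condition; strong reachability of $x$ from $y$ via $v$ requires $x<_\pi y<_\pi v$.) What the actual proof does is use strong reachability only to pin down $x$ and $y$ as the smallest and second-smallest color-$i$ neighbors of $v$, and it then needs a concretely specified third graph: in $I_{i+2c}$ each vertex with at least two color-$i$ neighbors is assigned the \emph{complement} of the open interval between the positions of its two smallest color-$i$ neighbors (color-$i$ vertices staying at their points), so that $u$, sitting strictly between those positions, is separated from $v$ geometrically rather than by a reachability contradiction. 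Without defining this third graph and checking it remains a supergraph of $G$, the ``careful case analysis'' you defer cannot be carried out, so as written the proposal has a genuine gap.
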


\begin{proof}
The proof proceeds similarly as the proof of
Theorem~\ref{thm:bowco}. Let $n$ be the number of vertices in $G$. We consider a total order $\pi\in \Pi(G)$ on the vertices
of $G$ 
such that for any $v$, at most $c=\co_2(G)$ vertices are strongly
2-reachable from $v$. Again, any notion of order between the vertices of
$G$ in this proof will implicitly refer to $\pi$. As before, we start by
greedily coloring $G$, with at most $c$ colors, such that for any $v$
and any vertex $u\ne v$ that is strongly 2-reachable from $v$, the
colors of $u$ and $v$ are distinct. For each color class $1\le i \le
c$, we consider the two interval graphs
$I_i$ and $I_{i+c}$ of the proof of Theorem~\ref{thm:bowco}, and a
circular-arc graph $I_{i+2c}$ defined as follows. Let
$v_1<\ldots <v_\ell$ be the vertices colored $i$ in $G$. Again, each
vertex $v_j$ ($1\le j\le \ell$) is mapped to the point $\{j\}$. Each vertex $v$ not colored $i$ is
mapped (1) to the point $\{n\}$ if $v$ has no neighbor colored $i$,
(2) to
the interval $[j,n]$ if $v_j$ is the unique neighbor of $v$
colored $i$, and otherwise (3) to the complement of the open interval
$(j,k)$, where $v_j$ and $v_k$ are the smallest and second smallest neighbors of $v$ colored
$i$ (with respect to $\pi$). An example of construction of $I_{i+2c}$ is
illustrated in Figure~\ref{fig:box-rep2}.

\begin{figure}[htb]
 \centering
 \includegraphics[height=3.5cm]{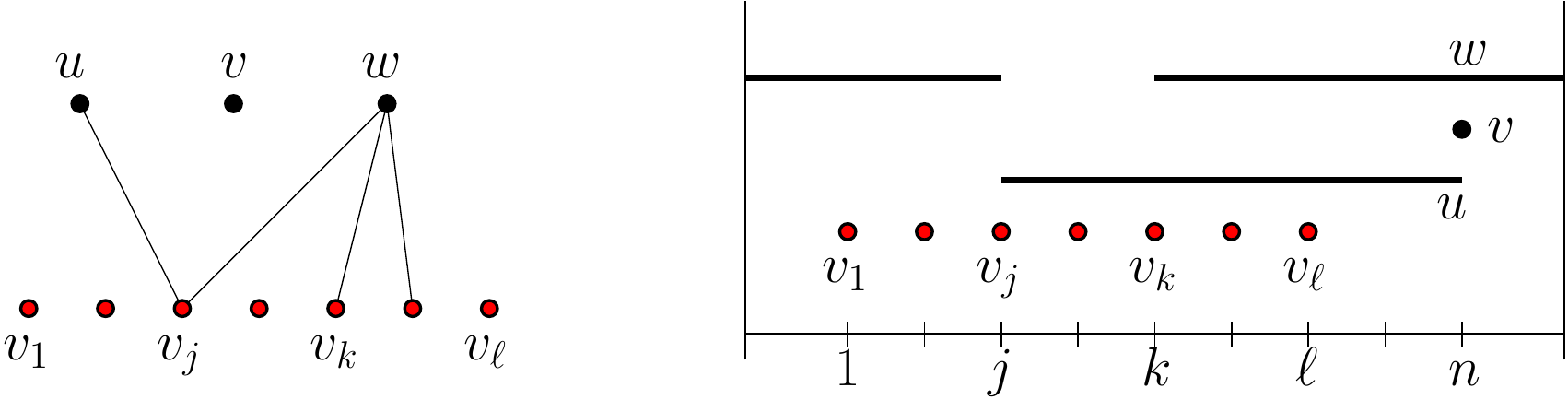}
 \caption{Left: A graph, with the vertices colored $i$ depicted in
   red; Right: The corresponding circular-arc graph $I_{i+2c}$.}
 \label{fig:box-rep2}
\end{figure}

We now prove that $G$ is precisely the intersection of the graphs
$I_i$ for $1\le i \le
3c$, which will show that $G$ is the intersection of at most $3c=3
\co_2(G)$ circular-arc graphs. We already proved in the previous
section that for each $1\le i \le 2c$,
the graphs $I_i$ are supergraphs of $G$. We prove that it is also
the case for the graphs $I_{i+2c}$ with $1 \le i \le c$. Observe
that in the
graph $I_{i+2c}$, any vertex $v$ not
colored $i$ is adjacent to all the vertices not colored $i$, and to
all its neighbors in $G$ that are colored $i$. Since there is no egde
between two vertices colored $i$ in
$G$, it follows that every edge $uv$ in $G$ is also an edge of
$I_{i+2c}$.

Hence, in order
to prove that $G$ is precisely the intersection of the graphs
$I_i$ for $1\le i \le
3c$, it is sufficient to prove that each non-edge $uv$
of $G$ is also a non-edge in a graph $I_i$ for some $1\le i \le
3c$. Consider two non-adjacent vertices $u<v$ in $G$.
We can assume that $u$ and $v$ have distinct colors $i$ and $j$, respectively
(otherwise $uv$ is a non-edge in the three graphs $I_i$, $I_{i+c}$,
and $I_{i+2c}$
corresponding to their common color class). If $v$ has no neighbor colored
$i$ in $G$, then $v$ has no neighbor colored $i$ in each of the three graphs $I_i$, $I_{i+c}$,
and $I_{i+2c}$, and thus $u$ and $v$ are non-adjacent in each of these
graphs. If $v$ has a unique neighbor colored $i$ in $G$,
call it $w$ (it is different from $u$, since $u$ and $v$ are
non-adjacent), then it follows from the construction of $I_i$ and
$I_{i+c}$ that $w$ is the unique neighbor of $v$ colored $i$ in
$I_i\cap I_{i+c}$, and thus $u$ and $v$ are non-adjacent in $I_i\cap
I_{i+c}$. So we can assume that $v$ has
at least two neighbors colored $i$. As in the proof of
Theorem~\ref{thm:bowco} (using the definition of $I_i$ and $I_{i+c}$) we can assume that $v$ has two neighbors $x$
and $y$ colored $i$, such that $x<u<y$. Take $x$ and $y$ minimal (with
respect to $\pi$) with this property. 

By the definition of the strong
2-coloring number, we can assume that at most one neighbor of $v$
colored $i$ precedes $v$ in $\pi$ (since otherwise the smaller neighbor would
be strongly 2-reachable from the larger neighbor, via $v$, which would contradict
the fact that the two neighbors have the same color). Hence, it
follows that $x$ and $y$ are respectively the smallest and second
smallest neighbors of $v$ colored $i$. But since
$x<u<y$ and $u$ is colored $i$, it follows from the definition of $I_{i+3c}$ that $u$ and $v$ are non-adjacent in $I_{i+3c}$,
as desired.
\end{proof}

The following result was recently proved by Van den Heuvel, Ossona de Mendez,
Quiroz,  Rabinovich and Siebertz~\cite{HOQRS17}.

\begin{thm}\label{thm:cot}
If $G$ has no $K_t$-minor, then $\co_2(G)\le \tfrac52 (t-1) (t-2)$.
\end{thm}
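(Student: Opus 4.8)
The plan is to construct, for any $K_t$-minor-free graph $G$, a single linear order $\pi$ of $V(G)$ in which every vertex has at most $\tfrac52(t-1)(t-2)$ strongly $2$-reachable vertices, and to obtain $\pi$ from a chordal partition of $G$ adapted to the excluded minor. Concretely, I would first partition $V(G)$ into nonempty connected parts $B_1,\ldots,B_s$, extracted greedily so that (i) the quotient graph $H$ obtained by contracting each $B_i$ is chordal, with the extraction order realising a perfect elimination ordering, and (ii) each $G[B_i]$ is a \emph{geodesic}, i.e.\ an isometric shortest path of the graph remaining when $B_i$ is extracted. The order $\pi$ then lists the parts in extraction order and orders each geodesic along itself. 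Since $H$ is chordal and, being a minor of $G$, has no $K_t$-minor, its clique number is at most $t-1$; hence $H$ is $(t-2)$-degenerate and, by the elimination ordering, the set of earlier parts adjacent to any given part is a clique of $H$ of size at most $t-2$.

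The geometric heart of the argument is a shortcut lemma: if a vertex $x$ is present in the graph at the moment a geodesic part $B_l$ is extracted, then the neighbours of $x$ on $B_l$ occupy an interval of boundedly many consecutive vertices of the path, since two such neighbours lying far apart along $B_l$ would create a path through $x$ shorter than $B_l$ itself, contradicting minimality. Applying this with $x$ a common neighbour, and combining two such shortcuts through a fixed vertex $v$, shows that all vertices of a given earlier part that witness a length-$2$ strong reach of $v$ lie within a window of at most $5$ consecutive vertices; this is the source of the constant $\tfrac52$.

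I would then bound the number of strongly $2$-reachable vertices of $v\in B_i$ by splitting the strongly $2$-reachable $u\le_\pi v$ into back-neighbours of $v$ (edges $uv$) and vertices joined to $v$ by a length-$2$ path $u\,w\,v$ with $w>_\pi v$. For the second, more delicate type, the key point is that the part of the common neighbour $w$ is a later part adjacent to $B_i$, so both the part of $u$ and $B_i$ lie in the common back-clique of that part; chordality therefore confines the part of $u$ to the back-clique of $B_i$, a family of at most $t-2$ parts, while the shortcut lemma caps the number of admissible $u$ inside each such part. Collecting the at most $\binom{t-1}{2}$ relevant pairs of parts inside the clique of size at most $t-1$ together with the window constant then yields the bound $\tfrac52(t-1)(t-2)$.

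The step I expect to be the main obstacle is exactly this cross-part accounting for length-$2$ reaches. The difficulty is that a single part is adjacent to unboundedly many \emph{later} parts, so the common neighbour $w$ cannot be localised naively; one must instead use the perfect elimination ordering to force the part of $u$ and the part of $v$ into a single clique of the chordal quotient, and then argue that the geodesic structure survives this reorganisation so that the shortcut lemma still applies. Reconciling the extraction order (needed for the geodesic shortcut) with the elimination order (needed for the clique bound) in a single ordering $\pi$, and carrying out the resulting double count over pairs of parts without over- or under-counting, is where essentially all the work lies and is what pins down both the quadratic growth and the precise constant $\tfrac52$.
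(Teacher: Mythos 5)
First, a point of reference: the paper does not actually prove this theorem. It is quoted verbatim from van den Heuvel, Ossona de Mendez, Quiroz, Rabinovich and Siebertz~\cite{HOQRS17} (their bound $\co_r(G)\le\binom{t-1}{2}(2r+1)$ specialised to $r=2$), so your attempt has to be judged against that external argument rather than against anything in this note. Your sketch does identify the right circle of ideas --- an ordered partition into isometric paths, a chordal quotient of bounded clique number, and a shortcut lemma confining the reachable vertices on each geodesic to a window of $2r+1=5$ consecutive vertices; the window argument, as you state it, is sound.

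The genuine gap is that the decomposition you posit is never constructed, and its existence is the entire content of the theorem --- indeed, the properties you assume are collectively too strong to be available. If each part really were a single geodesic of the residual graph, the quotient really were chordal with the earlier neighbours of each part forming a clique, and that clique really had at most $t-2$ parts, then your own window lemma would give at most $5$ reachable vertices in each of at most $t-1$ parts, i.e.\ $\co_2(G)=O(t)$ --- linear in $t$, far stronger than the theorem and not what~\cite{HOQRS17} obtain. That mismatch is the tell: the greedy extraction cannot deliver (i), (ii) and the back-clique bound simultaneously; the decomposition actually available is weaker (the relevant bags are unions of several geodesics rather than single isometric paths), and the extra factor of $t$ in the bound is exactly the price of that weakening. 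Your final count (``the at most $\binom{t-1}{2}$ relevant pairs of parts \ldots\ yields the bound'') is not derived from anything established earlier in the sketch, and your closing paragraph concedes that reconciling the extraction order with the elimination order and carrying out the double count is unresolved --- but that reconciliation \emph{is} the proof. (A smaller point: for strong reachability, where $u<_\pi v$ is reached through vertices $>_\pi v$, you need the \emph{earlier} neighbours of each part to form a clique, which is the reverse of the perfect elimination ordering you invoke.)
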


Together with Theorem~\ref{thm:cagco}, this immediately implies
Theorem~\ref{thm:cibomin}.

\medskip

The $\log t$ factor between Theorems~\ref{thm:bomin}
and~\ref{thm:cibomin} raises some interesting questions about the
parameter $\di^\circ$. It is known that every $n$-vertex graph has
boxicity at most $n/2$, and equality holds only for the complete graph
$K_n$ ($n$ even) minus a perfect matching. However this graph is a
circular-arc graph (see Figure~\ref{fig:stex}) and thus has circular dimension equal to 1.

\begin{figure}[htb]
 \centering
 \includegraphics{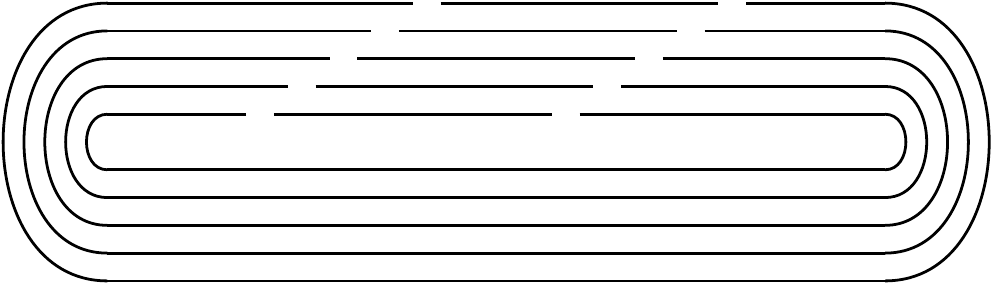}
 \caption{A circular-arc graph representation of the complete graph $K_{10}$ minus a perfect matching.}
 \label{fig:stex}
\end{figure}

\begin{qn}
What is the maximum circular dimension of a graph on $n$ vertices?
\end{qn}

It was observed in ~\cite{She80} that there are $2^{\Theta(b n
  \log n)}$ $n$-vertex
graphs of circular dimension at most $b$, and thus almost all
$n$-vertex graphs have circular dimension $\Omega(n/\log
n)$.

\smallskip

It is known that every graph of maximum degree $\Delta$ has boxicity
$O(\Delta \log^{1+o(1)} \Delta)$~\cite{SW18}, while there are graphs of maximum degree $\Delta$ with boxicity
$\Omega(\Delta \log \Delta)$~\cite{ABC10}.

\begin{qn}
What is the maximum circular dimension of a graph of maximum degree $\Delta$?
\end{qn}

Since there are $2^{\Theta(\Delta n
  \log n)}$ $n$-vertex
graphs of maximum degree $\Delta$, it follows that almost all graphs
of maximum degree $\Delta$ have circular dimension
$\Omega(\Delta)$. On the other hand, it was proved by Aravind and
Subramanian~\cite{AS09} that every graph of maximum degree $\Delta$
has circular dimension $O(\Delta \log \Delta/\log \log \Delta)$.

\begin{acknowledgement} This work was initiated during the Order \&
  Geometry Workshop organized at the Gu\l towy Palace near Poznan,
  Poland, on September 14--17, 2016. We thank the organizers and
  participants for the interesting discussions and nice atmosphere. We
  also thank Sebastian Siebertz for his remark on graphs excluding a
  subdivision of $K_t$, and  N.R. Aravind for
  calling references~\cite{AS09}, \cite{Fei79}, and~\cite{She80} to
  our attention. The
  presentation of the paper was significantly improved thanks to the
  comments and suggestions of two
  anonymous reviewers. 
\end{acknowledgement}

\end{document}